\newtheorem{theorem}{Theorem}[section]
\newtheorem{lemma}[theorem]{Lemma}
\newtheorem{proposition}[theorem]{Proposition}
\theoremstyle{definition} \newtheorem{definition}{Definition}}
\theoremstyle{definition} }
\theoremstyle{definition} \newtheorem*{exercise*}{Exercise}}
\theoremstyle{definition}\newtheorem{example}{Example}}
\theoremstyle{definition} }
\theoremstyle{remark}}
\theoremstyle{remark}}
\theoremstyle{remark}}
\theoremstyle{definition}}
\theoremstyle{definition}}
\newcommand{\F}{\mathbb{F}}
\newcommand{\R}{\mathbb{R}}
\newcommand{\Z}{\mathbb{Z}}
\newcommand{\id}{\mathrm{id}}
\newcommand{\gr}{\mathrm{gr}}
\newcommand{\CKhred}{\widetilde{\mathcal{C}}_{\mathit{Kh}}}
\newcommand{\CKh}{\mathcal{C}_{\mathit{Kh}}}
\newcommand{\Kh}{\mathit{Kh}}
\newcommand{\Khred}{\widetilde{\mathit{Kh}}}
\newcommand{\rk}{\mathrm{rk}}
\newcommand{\KR}{\mathit{KR}}
\newcommand{\KRred}{\widetilde{\mathit{KR}}}
\newcommand{\aaii}{\,\raisebox{-0.3cm}{\includegraphics[scale=0.5]{./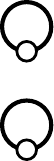}}\,}
\newcommand{\aaix}{\,\raisebox{-0.3cm}{\includegraphics[scale=0.5]{./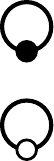}}\,}
\newcommand{\aaxi}{\,\raisebox{-0.3cm}{\includegraphics[scale=0.5]{./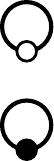}}\,}
\newcommand{\aaxx}{\,\raisebox{-0.3cm}{\includegraphics[scale=0.5]{./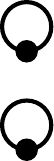}}\,}
\newcommand{\abi}{\,\raisebox{-0.3cm}{\includegraphics[scale=0.5]{./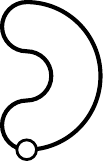}}\,}
\newcommand{\abx}{\,\raisebox{-0.3cm}{\includegraphics[scale=0.5]{./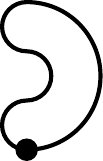}}\,}
\newcommand{\bai}{\,\raisebox{-0.3cm}{\includegraphics[scale=0.5]{./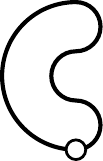}}\,}
\newcommand{\bax}{\,\raisebox{-0.3cm}{\includegraphics[scale=0.5]{./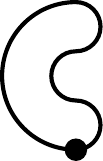}}\,}
\newcommand{\bbii}{\,\raisebox{-0.3cm}{\includegraphics[scale=0.5]{./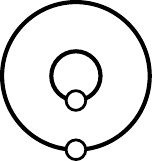}}\,}
\newcommand{\bbix}{\,\raisebox{-0.3cm}{\includegraphics[scale=0.5]{./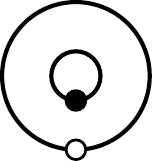}}\,}
\newcommand{\bbiii}{\,\raisebox{-0.59cm}{\includegraphics[scale=0.5]{./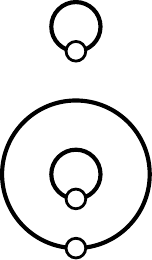}}\,}
\newcommand{\bbxii}{\,\raisebox{-0.59cm}{\includegraphics[scale=0.5]{./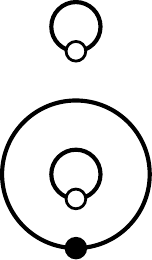}}\,}
\newcommand{\bbixi}{\,\raisebox{-0.59cm}{\includegraphics[scale=0.5]{./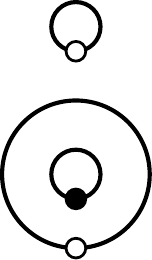}}\,}
\newcommand{\bbiix}{\,\raisebox{-0.59cm}{\includegraphics[scale=0.5]{./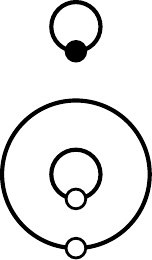}}\,}
\newcommand{\bbxxi}{\,\raisebox{-0.59cm}{\includegraphics[scale=0.5]{./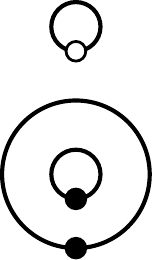}}\,}
\newcommand{\bbxix}{\,\raisebox{-0.59cm}{\includegraphics[scale=0.5]{./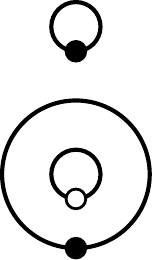}}\,}
\newcommand{\bbixx}{\,\raisebox{-0.59cm}{\includegraphics[scale=0.5]{./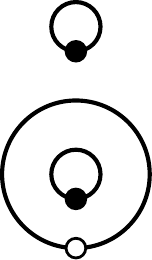}}\,}
\newcommand{\bei}{\,\raisebox{-0.59cm}{\includegraphics[scale=0.5]{./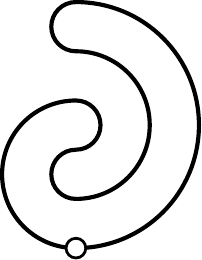}}\,}
\newcommand{\ccxxi}{\,\raisebox{-0.59cm}{\includegraphics[scale=0.5]{./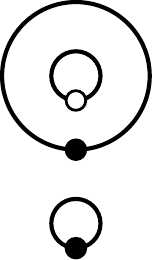}}\,}
\newcommand{\ccxix}{\,\raisebox{-0.59cm}{\includegraphics[scale=0.5]{./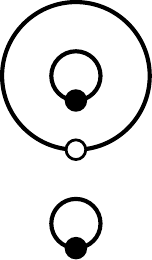}}\,}
\newcommand{\ccixx}{\,\raisebox{-0.59cm}{\includegraphics[scale=0.5]{./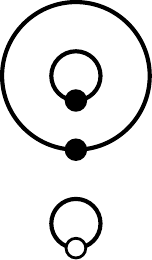}}\,}
\newcommand{\cei}{\,\raisebox{-0.59cm}{\includegraphics[scale=0.5]{./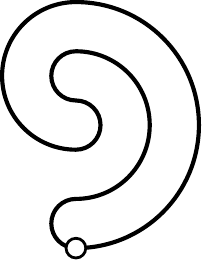}}\,}
\newcommand{\ddxii}{\,\raisebox{-0.59cm}{\includegraphics[scale=0.5]{./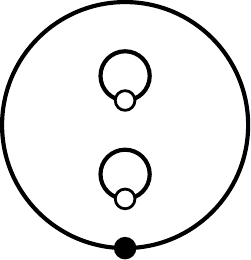}}\,}
\newcommand{\ddixi}{\,\raisebox{-0.59cm}{\includegraphics[scale=0.5]{./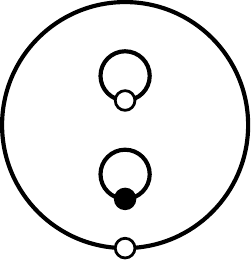}}\,}
\newcommand{\ddiix}{\,\raisebox{-0.59cm}{\includegraphics[scale=0.5]{./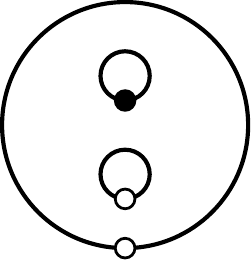}}\,}
\newcommand{\ddxxi}{\,\raisebox{-0.59cm}{\includegraphics[scale=0.5]{./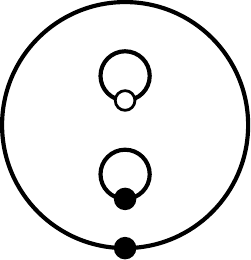}}\,}
\newcommand{\ddxix}{\,\raisebox{-0.59cm}{\includegraphics[scale=0.5]{./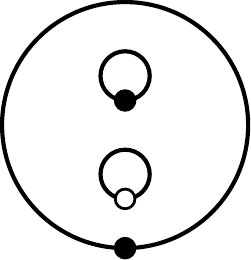}}\,}
\newcommand{\ddixx}{\,\raisebox{-0.59cm}{\includegraphics[scale=0.5]{./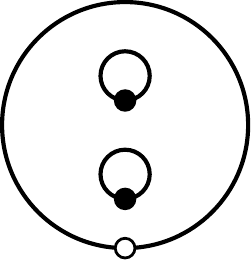}}\,}
\newcommand{\deii}{\,\raisebox{-0.59cm}{\includegraphics[scale=0.5]{./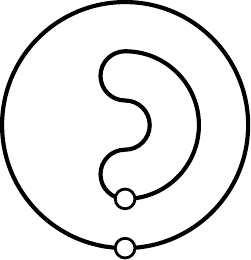}}\,}
\newcommand{\dexi}{\,\raisebox{-0.59cm}{\includegraphics[scale=0.5]{./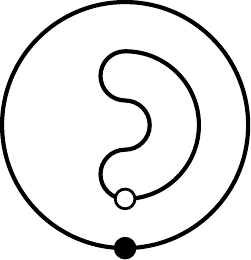}}\,}
\newcommand{\deix}{\,\raisebox{-0.59cm}{\includegraphics[scale=0.5]{./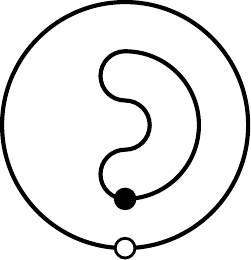}}\,}
\newcommand{\ebi}{\,\raisebox{-0.59cm}{\includegraphics[scale=0.5]{./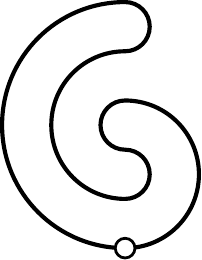}}\,}
\newcommand{\eci}{\,\raisebox{-0.59cm}{\includegraphics[scale=0.5]{./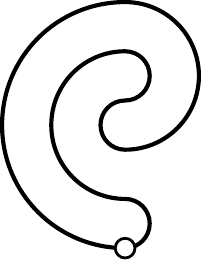}}\,}
\newcommand{\edii}{\,\raisebox{-0.59cm}{\includegraphics[scale=0.5]{./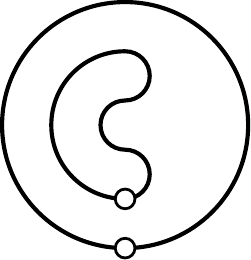}}\,}
\newcommand{\tangleone}{\,\raisebox{-0.3cm}{\includegraphics[scale=0.5]{./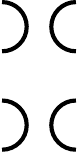}}\,}
\title{An Exceptional Splitting of Khovanov's Arc Algebras in Characteristic 2}
\author{Jesse Cohen\thanks{This material is based upon work supported by the National Science Foundation under Grant No. DMS-2204214.}\\University of Oregon}
\begin{document}
	\maketitle
	\begin{abstract}
		We show that there is an associative algebra $\widetilde{H}_n$ such that, over a base ring $R$ of characteristic 2, Khovanov's arc algebra $H_n$ is isomorphic to the algebra $\widetilde{H}_n[x]/(x^2)$. We also show a similar result for bimodules associated to planar tangles and prove that there is no such isomorphism over $\Z$.
	\end{abstract}
	\section{Introduction}
	In \cite{Khovanov2000}, Mikhail Khovanov introduced a categorification of the Jones polynomial in the form of a bigraded homology group
	\begin{align}
		\Kh(L)=\bigoplus_{i,j\in\Z}\Kh^{i,j}(L)
	\end{align}
	associated to each oriented link $L$ in $S^3$. This group has the unreduced Jones polynomial as its graded Euler characteristic:
	\begin{align}
		J(L)=\sum_{i,j\in\Z}(-1)^{i}\rk(\Kh^{i,j}(L))q^j.
	\end{align}
	These homology groups are functorial under smooth link cobordisms and have been used to great effect in low-dimensional topology. There are a variety of spectral sequences, many of which are themselves link invariants (cf. \cite{Baldwin2019}), whose $E^2$-pages are given by either Khovanov homology or its reduced version $\Khred(L)$ (cf. \cite{OzsSzBranched,Bloom2011,Kronheimer2011,Baldwin2019,Batson2015,Dowlin2018} for some examples). In \cite{Rasmussen2010}, Rasmussen used the spectral sequence defined by Lee in \cite{Lee2002} to define the $s$-invariant $s(K)$ of a knot $K$ and used this to give a combinatorial reproof of the Milnor conjecture --- that the slice genus of the $(p,q)$-torus knot is $\frac{(p-1)(q-1)}{2}$ --- the original proof of which, due to Kronheimer-Mrowka \cite{kronheimer1993gauge}, relied heavily on gauge theory. Similarly, the $s$-invariant can be used to give a combinatorial proof of the existence of exotic smooth structures on $\R^4$ (cf. \cite{Rasmussen2005}). More recently, the $s$-invariant was used by Piccirillo in \cite{Piccirillo2020} to show that the Conway knot is not smoothly slice and, in a similar vein, Hayden-Sundberg show in \cite{HaydenSundberg2021} that the cobordism maps on Khovanov homology can be used to distinguish exotically knotted smooth surfaces in the 4-ball which are topologically but not smoothly isotopic.
	
	Khovanov homology $\Kh(L)$ also admits a refinement to a spectrum  $\mathcal{X}_{\Kh}(L)$ (cf. \cite{LawsonLipshitzSarkar2021,HuKrizKriz2016}) whose homotopy type is an invariant of the link $L$ and whose reduced singular cohomology recovers $\Kh(L)$. This allows for the construction of Steenrod operations on Khovanov homology (cf. \cite{LipshitzSarkar2014,Bodish2020,Moran2019}) which can be used to distinguish some pairs of non-isotopic knots with the same Khovanov homology groups.
 	
	In \cite{Khovanov2002}, Khovanov defined algebras $H_n$, the \emph{arc algebras} on $2n$ points, and associated to an $(2m,2n)$-tangle diagram $T$ a complex of $(H_m,H_n)$-bimodules $\CKh(T)$ whose chain homotopy type is an invariant of the underlying tangle in $D^2\times I$. These bimodules and their variants can also be used to define invariants of annular links (cf. \cite{Beliakova2019,Lipshitz2020,Lawson2022}) as well as links in $S^2\times S^1$ (cf. \cite{Rozansky,Willis,MMSW}). The algebras $H_n$ and the bimodules $\CKh(T)$, like Khovanov homology, admit stable homotopy refinements (cf. \cite{LawsonLipshitzSarkar2021,Lawson2022}) and are also of importance in the representation theory of the quantum group $U_q(\mathfrak{sl}_2)$ (cf. \cite{ChenKhovanov2014,Stroppel2009,Brundan2010,Brundan2011}).
	\subsection{Results}
	In \cite{Shumakovitch2004TorsionOT}, Shumakovitch showed that Khovanov homology with co\"{e}fficients in $\F=\F_2$ decomposes as $\Kh(L)\cong\Khred(L)\otimes A$, where $A=\F[x]/(x^2)$. We show that the analog of this result holds for the arc algebras $H_n$ and Khovanov bimodules $\CKh(T)$ defined over a ring of characteristic 2. We also show that there is no such isomorphism of arc algebras over $\Z$.
	\subsection*{Acknowledgments}
	The author would like to thank Mikhail Khovanov, Robert Lipshitz, Catharina Stroppel, Nicolas Addington, and Ben Young for several helpful conversations. 
	\section{Background}
	Fix a base ring $R$. Given a non-negative integer $n$, let $\mathfrak{C}_n$ denote the set of planar crossingless matchings on $2n$ points, i.e. the set of $(2n,0)$ Temperley-Lieb diagrams.
	\begin{figure}
		\begin{center}
			\begin{align*}
				\mathfrak{C}_2=\left\{\,\raisebox{-0.7cm}{\includegraphics[scale=1]{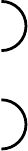}}\,\,\raisebox{-0.5cm}{,}\,\,\,\raisebox{-0.7cm}{\includegraphics[scale=1]{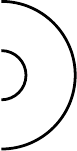}}\right\}
			\end{align*}
		\end{center}
		\caption{The set $\mathfrak{C}_2$ of planar crossingless matchings on $4$ points.}
		\label{C2}
	\end{figure}
	The \emph{arc algebra} $H_n$ over $R$ to be the associative graded $R$-algebra
	\begin{align}\label{HnDef}
		H_n=q^{-n}\bigoplus_{a,b\in\mathfrak{C}_n}\CKh(a^!b),
	\end{align}
	where $a^!$ is the result of flipping $a$ across the vertical axis, $a^!b$ is the result of gluing $a^!$ and $b$ along their common endpoints, and $\CKh:\mathrm{Cob}^{1+1}\to R-\textup{Mod}$ is Khovanov's TQFT whose value on a single circle is given by
	\begin{align}
		\CKh(\bigcirc)=A:=R[x]/(x^2)
	\end{align}
	as a commutative Frobenius algebra with comultiplication defined on generators by $\Delta(1)=1\otimes x+x\otimes 1$ and $\Delta(x)=x\otimes x$. The elements $1$ and $x$ are endowed with an integer-valued \emph{quantum grading} by taking $\gr_q(1)=1$ and $\gr_q(x)=-1$ and the formal power $q^{-n}$ in line (\ref{HnDef}) denotes a shift in this grading by $-n$. We take the convention that $H_0=R$. It is well-known that $H_n$ has the structure of a graded associative unital $R$-algebra given by applying the functor $\CKh$ to the minimal saddle cobordisms $\Sigma_{a,b,c}:a^!b\sqcup b^!c\to a^!c$. More precisely, if $\bm{v}$ and $\bm{v}'$ are labelings of the components of $a^!b$ and $b^!c$, respectively, then the product $(a^!b,\bm{v})(b^!c,\bm{v}')$ is given by $\Kh(\Sigma_{a,b,c})(\bm{v}\sqcup\bm{v}')$ and products of the form $(a^!b,\bm{v})(c^!d,\bm{v}')$ for $c\neq b$ vanish.
	\begin{definition}
		Given a crossingless matching $a\in\mathfrak{C}_n$, we distinguish the bottom-most of its $2n$ endpoints as a marked point. The \emph{reduced arc algebra} over $R$ on $2n$-points is then the associative graded $R$-algebra $\widetilde{H}_n$ defined by
		\begin{align}
			\widetilde{H}_n=q^{-n}\bigoplus_{a,b\in\mathfrak{C}_n}\CKhred(a^!b).
		\end{align}
		Here, $\CKhred$ denotes the reduced Khovanov complex given by the choice of basepoint as the quotient complex in which the marked component of every generator is labeled with a 1 and the entire complex is endowed with a quantum grading shift of $-1$.
	\end{definition}
	\begin{lemma}
		Let $\widetilde{m}:\widetilde{H}_n\otimes\widetilde{H}_n\to\widetilde{H}_n$ be the map induced by multiplication on $H_n$. Then $(\widetilde{H}_n,\widetilde{m})$ is a graded associative unital algebra.
	\end{lemma}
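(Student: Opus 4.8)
The plan is to realize $\widetilde{H}_n$ as a quotient of $H_n$, up to an overall grading shift, by a two-sided ideal; once that is done, $\widetilde{m}$ is the induced multiplication and associativity and unitality follow from the cited fact that $H_n$ is a graded associative unital $R$-algebra. For $a,b\in\mathfrak{C}_n$ let $J(a^!b)\subseteq\CKh(a^!b)$ be the $R$-submodule spanned by the standard generators whose marked component is labeled $x$. Since $\CKhred$ is defined as the quotient complex in which every marked component is labeled $1$, the natural surjection $\CKh(a^!b)\to\CKhred(a^!b)$ has kernel exactly $J(a^!b)$ (the quantum shift by $-1$ being irrelevant to this statement), so with $J:=\bigoplus_{a,b}J(a^!b)\subseteq H_n$ we obtain a graded surjection $\pi\colon H_n\to\widetilde{H}_n$ with kernel $J$, identifying $\widetilde{H}_n$ with a grading shift of $H_n/J$. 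Because $J$ is spanned by homogeneous elements, it suffices to prove that $J$ is a two-sided ideal of $H_n$: then $H_n/J$, and hence $\widetilde{H}_n$, is a graded associative unital $R$-algebra, $\widetilde{m}$ is the unique map with $\widetilde{m}\circ(\pi\otimes\pi)=\pi\circ m$, and the unit is $\pi(1_{H_n})=\sum_a\pi(e_a)$, which is nonzero since each idempotent $e_a\in\CKh(a^!a)$ (the all-$1$ labeling of $a^!a$) has its marked component labeled $1$.

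So the task reduces to $m(J\otimes H_n)\subseteq J$ and $m(H_n\otimes J)\subseteq J$. Products between mismatched summands vanish, so fix $a,b,c$ and recall that $m|_{\CKh(a^!b)\otimes\CKh(b^!c)}=\CKh(\Sigma_{a,b,c})$ for the minimal saddle cobordism $\Sigma_{a,b,c}\colon a^!b\sqcup b^!c\to a^!c$. The cleanest packaging is through the basepoint operator $X\colon H_n\to H_n$ of degree $-2$ that multiplies the label of the marked component by $x$; one has $J=\im X=\ker X$, because $X^2=0$ and $x\cdot A=\ker(x\cdot)$ in $A=R[x]/(x^2)$. A dot slides freely along a connected sheet of a cobordism, and the marked components of $a^!b$, of $b^!c$, and of $a^!c$ all lie on a single connected sheet of $\Sigma_{a,b,c}$; hence $X\circ m=m\circ(X\otimes\id)=m\circ(\id\otimes X)$, so $X$ is an endomorphism of $H_n$ as an $(H_n,H_n)$-bimodule and its image $J$ is a sub-bimodule, that is, a two-sided ideal. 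Equivalently, one may decompose $\CKh(\Sigma_{a,b,c})$ into the elementary merge and split maps of its saddles and track the marked circle stage by stage, using that multiplication in $A$ sends $x\otimes\alpha$ into $Rx$ and that $\Delta(x)=x\otimes x$, while saddles not meeting the marked circle leave its label alone; this gives $\CKh(\Sigma_{a,b,c})\bigl(J(a^!b)\otimes\CKh(b^!c)\bigr)\subseteq J(a^!c)$ and symmetrically on the other factor.

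The one genuinely geometric input --- and the step I expect to be the main obstacle --- is the claim that the distinguished bottom endpoint picks out a single connected sheet of $\Sigma_{a,b,c}$ joining the marked components of $a^!b$ and $b^!c$ to that of $a^!c$; equivalently, that the dot slides across the multiplication cobordism. This has to be read off from Khovanov's explicit model for $\Sigma_{a,b,c}$, by checking that the arcs incident to the marked point are carried along by the defining saddles and are never themselves the arc along which a saddle is performed (or that, where they are, the label bookkeeping above still applies). Everything else is formal, given the algebra structure on $H_n$ together with the two facts $x\cdot A\subseteq Rx$ and $\Delta(x)=x\otimes x$ in $A$.
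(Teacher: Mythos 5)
Your proposal is correct and takes essentially the same approach as the paper: both identify $\widetilde{H}_n$ (up to a grading shift) with the quotient of $H_n$ by the two-sided ideal spanned by standard generators whose marked component carries an $x$, and then inherit associativity and unitality from $H_n$. Your extra packaging of that ideal as $\im X=\ker X$ for a degree $-2$ basepoint operator $X$, with the ideal property deduced from dot sliding across the single sheet of $\Sigma_{a,b,c}$ containing the marked points, is a clean way of making precise the step the paper dismisses as ``straightforward.''
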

	\begin{proof}
		It is straightforward to see that the subgroup $I_x\subset H_n$ generated by elements in which the marked component is labeled by $x$ is a homogeneous two-sided ideal. The statement then follows from the fact that $\widetilde{H}_n=H_n/I_x$.
	\end{proof}
	\section{The Main Theorem}
	Given crossingless matchings $a,b\in\mathfrak{C}_n$, let $\kappa_0\in\pi_0(a^!b)$ be the marked component of $a^!b$ and define $\pi_*(a^!b)=\pi_0(a^!b)\smallsetminus\{\kappa_0\}$. We define a linear map $\lambda:\widetilde{H}_n\otimes A\to H_n$ as follows. Let
	\begin{align}
		\widetilde{\mathcal{B}}_n=\bigcup_{a,b\in\mathfrak{C}_n}\left\{(a^!b,\bm{v})|\bm{v}\in\{1,x\}^{\pi_*(a^!b)}\right\}
	\end{align}
	be the ``standard'' basis for $\widetilde{H}_n$ consisting of two crossingless matchings $a,b\in\mathfrak{C}_n$ and a labelling $\bm{v}:\pi_*(a^!b)\to\{1,x\}$ of the unmarked components of $a^!b$ by either $1$ or $x$. The marked component of a generator of $\widetilde{H}_n$ will always implicitly be labeled by $1$ but, in light of the following, it will be convenient to think of the labeling restricted to unmarked components only. Given a basis element $(a^!b,\bm{v})\in\widetilde{\mathcal{B}}_n$ and $s\in\{1,x\}$, let $(a^!b,\bm{v})_s\in H_n$ be the result of extending the labeling $\bm{v}$ to all of $\pi_0(a^!b)$ by taking $\bm{v}(\kappa_0)=s$. Now define
	\begin{align}
		\mathfrak{X}(a^!b,\bm{v})=\{\kappa\in\pi_*(a^!b)|\bm{v}(\kappa)=x\}
	\end{align}
	and, for a component $\kappa\in\mathfrak{X}(a^!b,\bm{v})$, define  $(a^!b,\bm{v}_\kappa)\in H_n$ by taking $\bm{v}_\kappa(\kappa_0)=x$, $\bm{v}_\kappa(\kappa)=1$, and $\bm{v}_\kappa(\kappa')=\bm{v}(\kappa')$ for all other components $\kappa'$. In other words, $(a^!b,\bm{v}_\kappa)$ is the result of labeling the marked component by $x$ and relabeling $\kappa$ with $1$. We then define $\lambda$ on basis elements $(a^!b,\bm{v})\otimes s\in\widetilde{\mathcal{B}}_n\otimes\{1,x\}$ by
	\begin{align}
		\lambda((a^!b,\bm{v})\otimes s)=\begin{cases}
			(a^!b,\bm{v})_x\hspace{3.6cm}\textup{if $s=x$}\\
			(a^!b,\bm{v})_1+\sum\limits_{\kappa\in\mathfrak{X}(a^!b,\bm{v})}(a^!b,\bm{v}_\kappa)\hspace{0.5cm}\textup{otherwise}.
		\end{cases}
	\end{align}
	\begin{example}
		Letting hollow and solid dots represent the labels of components via the convention $\circ=1$ and $\bullet=x$, if
		\begin{align}
			(a^!b,\bm{v})=\aaix
		\end{align}
		i.e. $a=b$ is the first of the crossingless matchings in $\mathfrak{C}_2$ depicted in Figure \ref{C2} and $\bm{v}:\pi_*(a^!b)\to\{1,x\}$ is the map taking the unmarked component of $a^!b$ to $x$, then
		\begin{align}
			\lambda((a^!b,\bm{v})\otimes 1)=\aaix+\aaxi
		\end{align}
		and
		\begin{align}
			\lambda((a^!b,\bm{v})\otimes x)=\aaxx.
		\end{align}
	\end{example}
	\begin{lemma}
		$\lambda$ is a graded $R$-linear isomorphism.
	\end{lemma}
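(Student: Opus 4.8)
The plan is to check that $\lambda$ is graded, that it carries a basis to a basis (or at least a spanning set of the right size), and hence is an isomorphism. First I would verify that $\lambda$ is homogeneous. On the $s=x$ summand, $\lambda$ sends $(a^!b,\bm v)\otimes x$ to $(a^!b,\bm v)_x$; since in $\widetilde H_n$ the marked component is implicitly labeled $1$ (grading $+1$) and in $H_n$ it is now labeled $x$ (grading $-1$), and since $\gr_q(x)=-1$ in the tensor factor $A$, the net grading shift is $(-1)-(+1)=-2$ on the $\widetilde H_n$ side but is compensated by the $x\in A$ carrying grading $-1$ together with the overall shift conventions; I would just tabulate the four relevant shift contributions ($q^{-n}$ on both sides, the $-1$ shift in $\CKhred$, the label-grading of $\kappa_0$, and the grading of $s\in A$) and observe they cancel. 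On the $s=1$ summand the leading term $(a^!b,\bm v)_1$ visibly has the right degree, and each correction term $(a^!b,\bm v_\kappa)$ differs from it by relabeling $\kappa_0$ from $1$ to $x$ and $\kappa$ from $x$ to $1$, which changes two labels in opposite directions and hence preserves the quantum grading; so the whole image is homogeneous of the same degree as $(a^!b,\bm v)_1$, matching $s=1$ (grading $+1$ in $A$).

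Next I would exhibit the inverse explicitly rather than merely count dimensions, since the triangular structure makes this clean. Order, for each fixed pair $(a,b)$, the monomial basis elements $(a^!b,\bm w)_s$ of $H_n$ by the number of components labeled $x$. Then $\lambda\big((a^!b,\bm v)\otimes x\big)=(a^!b,\bm v)_x$ exactly, and $\lambda\big((a^!b,\bm v)\otimes 1\big)=(a^!b,\bm v)_1+\big(\text{terms with }\kappa_0\text{ labeled }x\big)$. So with respect to the decomposition of $H_n$ into the span $V_1$ of monomials with $\bm v(\kappa_0)=1$ and the span $V_x$ of those with $\bm v(\kappa_0)=x$, the map $\lambda$ is block upper-triangular: it restricts to a bijection $\widetilde{\mathcal B}_n\otimes x \xrightarrow{\ \sim\ } V_x$ on the nose, and modulo $V_x$ it sends $(a^!b,\bm v)\otimes 1 \mapsto (a^!b,\bm v)_1$, which is again a bijection of bases $\widetilde{\mathcal B}_n\otimes 1 \to V_1$. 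A block-triangular map with invertible diagonal blocks is invertible; concretely the inverse of the $(s=1)$-block is $(a^!b,\bm v)_1 \mapsto (a^!b,\bm v)\otimes 1 - \sum_{\kappa\in\mathfrak X(a^!b,\bm v)}(a^!b,\bm v_\kappa)\otimes 1$ followed by rewriting each correction via the $V_x$-block, and one checks $\lambda^{-1}$ is $R$-linear and graded because $\lambda$ is. Since $\widetilde{\mathcal B}_n\otimes\{1,x\}$ is an $R$-basis of $\widetilde H_n\otimes A$ and its image is an $R$-basis of $H_n$, $\lambda$ is an isomorphism of graded $R$-modules.

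I would present this as: (1) a short lemma-internal computation of the four grading contributions to conclude homogeneity; (2) the observation that $\lambda$ respects the direct-sum decomposition over pairs $(a,b)$, so it suffices to work one $\CKh(a^!b)$-block at a time; (3) the triangularity argument above, giving both injectivity and surjectivity at once and an explicit $\lambda^{-1}$. The only mildly delicate point is bookkeeping the grading shifts — in particular making sure the $-1$ shift built into $\CKhred$ together with the $\gr_q$ of the adjoined $A$-factor exactly accounts for the difference in the label of $\kappa_0$ between the two sides — so I expect that to be where the care is needed; the algebra of the triangular change of basis is routine. (Note this lemma only asserts $\lambda$ is an isomorphism of graded $R$-modules; that it is also a ring map when $\mathrm{char}\,R=2$ is presumably the content of the following theorem and is not needed here.)
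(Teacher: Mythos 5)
Your argument is correct and follows essentially the same route as the paper: both observe that, relative to the splitting of the standard basis of $H_n$ by whether $\kappa_0$ is labeled $1$ or $x$, the change of basis induced by $\lambda$ is block-triangular with identity diagonal blocks, and both check grading preservation by noting that the correction terms $(a^!b,\bm v_\kappa)$ have the same multiset of labels as $(a^!b,\bm v)_1$. The paper invokes equality of ranks and the invariant basis number property rather than writing down $\lambda^{-1}$ explicitly, but that is a cosmetic difference only.
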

	\begin{proof}
		Note that the set
		\begin{align}
			\mathcal{B}_n=\bigcup_{a,b\in\mathfrak{C}_n}\left\{\lambda((a^!b,\bm{v})\otimes s)|\bm{v}\in\{1,x\}^{\pi_*(a^!b)},s\in\{1,x\}\right\}
		\end{align}
		forms an $R$-basis for $H_n$ since there is a block lower-triangular matrix of the form
		\begin{align}
			\begin{pmatrix}
				\id & 0\\
				B & \id
			\end{pmatrix},
		\end{align}
		where $B$ is a square matrix with entries in $\{0,1\}$, taking the standard basis
		\begin{align}
			\mathcal{B}_n^{\hspace{0.02cm}\mathrm{std}}=\bigcup_{a,b\in\mathfrak{C}_n}\left\{(a^!b,\bm{v})|\bm{v}\in\{1,x\}^{\pi_0(a^!b)}\right\}
		\end{align}
		for $H_n$ to $\mathcal{B}_n$. Here, we order $\mathcal{B}_n^{\hspace{0.02cm}\mathrm{std}}$ so that those basis elements with $\bm{v}(\kappa_0)=1$ appear first in the ordering. Now we have $\mathrm{rk}_R\widetilde{H}_n\otimes_R A=\mathrm{rk}_R H_n$ so $\lambda$ is automatically an $R$-linear isomorphism since commutative rings have the invariant basis number property and both $\widetilde{H}_n\otimes_R A$ and $H_n$ are free as $R$-modules. Note that $\gr_q((a^!b,\bm{v})\otimes 1)=\gr_q((a^!b,\bm{v})_1)=\gr_q((a^!b,\bm{v}_\kappa))$ for any $\kappa\in\mathfrak{X}(a^!b,\bm{v})$ since each of these has the same number of tensor factors of $1$ and $x$. For the same reason, we have $\gr_q((a^!b,\bm{v})\otimes x)=\gr_q((a^!b,\bm{v})_x)$ so $\lambda$ preserves quantum gradings and is, therefore, a graded isomorphism.
	\end{proof}
	\begin{theorem}\label{MainThm}
		If $R$ is a ring of characteristic 2, then $\lambda$ is a graded $R$-algebra isomorphism.
	\end{theorem}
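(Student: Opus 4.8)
The plan is to prove that $\lambda$ is multiplicative; by the previous lemma it is already a graded $R$-linear isomorphism, and a multiplicative $R$-linear isomorphism is automatically an algebra isomorphism, so this suffices. I would first rewrite $\lambda$ in operator form. Let $j\colon\widetilde H_n\hookrightarrow H_n$ be the $R$-linear section carrying a standard basis element to the labeled diagram in which the marked component $\kappa_0$ is labeled by $1$, and let $P\colon H_n\to H_n$ project onto the span of the standard basis elements with $\bm v(\kappa_0)=1$, so that $\overline P:=\id-P$ projects onto the span of those with $\bm v(\kappa_0)=x$. Then $j$ identifies $\widetilde H_n$ with $\im P$, and the construction of the induced product gives $j\big(\widetilde m(\alpha,\beta)\big)=P\big(j(\alpha)\cdot j(\beta)\big)$. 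Introduce $R$-linear endomorphisms of $H_n$: the basepoint operator $\chi$, multiplying the label of $\kappa_0$ by $x$; the operator $E$ sending $(a^!b,\bm w)$ to the sum, over unmarked components $\kappa$ with $\bm w(\kappa)=x$, of the diagram obtained by relabeling $\kappa$ with $1$; and $E_0$, the analogue of $E$ acting on $\kappa_0$ instead. With $D:=\chi\circ E$, the definition of $\lambda$ reads
\begin{align*}
\lambda\big((a^!b,\bm v)\otimes x\big)=\chi\big(j(a^!b,\bm v)\big),\qquad\lambda\big((a^!b,\bm v)\otimes 1\big)=(\id+D)\big(j(a^!b,\bm v)\big),
\end{align*}
and one notes the elementary identities $\chi\circ\overline P=0$ (so $\chi\circ P=\chi$), $\chi^2=0$, $D\circ\overline P=0$, $E_0\circ j=0$, and $\chi\circ E_0=\overline P$. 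By $R$-bilinearity and the block decomposition of $H_n$ (which $\lambda$ respects, and in which products vanish unless the middle matchings coincide), it suffices to check $\lambda(\xi)\lambda(\eta)=\lambda(\xi\eta)$ for $\xi=\alpha\otimes s$ and $\eta=\beta\otimes t$ with $\alpha=(a^!b,\bm v)$, $\beta=(b^!c,\bm w)$ and $s,t\in\{1,x\}$; put $a:=j(\alpha)$ and $b:=j(\beta)$, both in $\im P$.

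Next I would establish two structural facts about the product on $H_n$ (which is $\CKh$ applied to the minimal saddle cobordism $\Sigma_{a,b,c}\colon a^!b\sqcup b^!c\to a^!c$). $(\mathrm i)$ \emph{$\chi$ is a bimodule endomorphism of $H_n$ and $\chi^2=0$.} From the structure of $\Sigma_{a,b,c}$ --- which is the identity outside the region where $b$ meets $b^!$, so that the arc of $a^!$ at its marked endpoint persists through it --- the marked components of $a^!b$, $b^!c$ and $a^!c$ all lie on one component of $\Sigma_{a,b,c}$; since multiplication of a fixed circle's label by $x$ commutes with the saddle maps of $\CKh$ (for the Frobenius algebra $A$: $x\cdot m(u\otimes v)=m(xu\otimes v)$ and $\Delta(xu)=(x\otimes 1)\Delta(u)$), we get $\chi(pq)=\chi(p)q=p\,\chi(q)$, and $\chi^2=0$ because $x^2=0$. $(\mathrm{ii})$ \emph{If $R$ has characteristic $2$, then $\widetilde E:=E+E_0$ is a derivation of $H_n$: $\widetilde E(pq)=\widetilde E(p)q+p\,\widetilde E(q)$.} By functoriality of $\CKh$ this reduces to checking that $\widetilde E_A\colon A\to A$, with $\widetilde E_A(1)=0$ and $\widetilde E_A(x)=1$, is a derivation relative to each elementary cobordism map; relative to the comultiplication $\Delta$ (and to the unit and counit, if these occur) this holds over any ring, whereas relative to the multiplication $m$ it requires $\widetilde E_A\big(m(x\otimes x)\big)=m\big(\widetilde E_A(x)\otimes x\big)+m\big(x\otimes\widetilde E_A(x)\big)$, i.e.\ $0=2x$ --- precisely the characteristic $2$ hypothesis. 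This is the only point at which the hypothesis enters.

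The four cases would then be checked as follows. For $s=t=x$: $\lambda(\xi)\lambda(\eta)=\chi(a)\chi(b)=\chi^2(ab)=0=\lambda\big(\widetilde m(\alpha,\beta)\otimes x^2\big)$. For $s=x$, $t=1$ (and, symmetrically, $s=1$, $t=x$): $\lambda(\xi)\lambda(\eta)=\chi(a)(\id+D)(b)=\chi(ab)+\chi\big(\chi(a)\cdot Eb\big)=\chi(ab)=\chi\big(P(ab)\big)=\lambda\big(\widetilde m(\alpha,\beta)\otimes x\big)$, using $(\mathrm i)$, $\chi^2=0$ and $\chi P=\chi$. For $s=t=1$: the cross term $D(a)D(b)=\chi^2(Ea\cdot Eb)=0$, and $D(a)b+a\,D(b)=\chi\big(Ea\cdot b+a\cdot Eb\big)=\chi\big(\widetilde E(a)b+a\,\widetilde E(b)\big)$ since $E_0a=E_0b=0$; hence, by $(\mathrm{ii})$,
\begin{align*}
\lambda(\xi)\lambda(\eta)=(\id+D)(a)\cdot(\id+D)(b)=ab+\chi\big(\widetilde E(ab)\big)=ab+D(ab)+\overline P(ab).
\end{align*}
In characteristic $2$ one has $ab+\overline P(ab)=P(ab)$ and $D(ab)=D\big(P(ab)\big)$, so $\lambda(\xi)\lambda(\eta)=P(ab)+D\big(P(ab)\big)=(\id+D)\big(j(\widetilde m(\alpha,\beta))\big)=\lambda\big(\widetilde m(\alpha,\beta)\otimes 1\big)$, which finishes the last case.

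I expect input $(\mathrm{ii})$ to be the crux: the ``exceptional'' relation $0=2x$ is exactly what forces $\widetilde E$ to be a derivation, and the whole computation funnels through it. A secondary point needing care is the geometric claim underlying $(\mathrm i)$ --- that in $\Sigma_{a,b,c}$ the marked components of the two factors both merge into the marked component of the product --- which is also what makes the $(x,x)$ case vanish. It is reassuring that the argument degenerates over $\Z$ at precisely the step $0=2x$, in line with the paper's assertion that no such isomorphism exists over $\Z$.
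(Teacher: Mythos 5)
Your proof is correct and takes a genuinely different route from the paper's. The paper verifies multiplicativity by a direct combinatorial analysis of the saddle cobordism $\Sigma_{a,b,c}$: in the case $s_1=s_2=1$ it partitions the outgoing circles by connected component of $\Sigma$, shows that the ``extra'' terms of $\lambda(\Khred(\Sigma)(\bm v\sqcup\bm v')\otimes 1)$ either cancel in pairs or match terms of $\Kh(\Sigma)(\lambda\otimes\lambda)$, and treats the cases where an incoming circle of the marked component $\Sigma_*$ is labeled $x$ separately. You instead repackage the whole computation into two clean structural facts --- $\chi$ is a central (bimodule) endomorphism with $\chi^2=0$, and $\widetilde E=E+E_0$ is a derivation of $H_n$ in characteristic $2$ --- after which all four cases follow by short formal manipulations. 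This is a more conceptual argument, and the extracted statement that $E+E_0$ is a derivation over $\F_2$ is an attractive structural observation in its own right, effectively isolating the ``reason'' the splitting exists. The cancellations in the paper's Subcase~1 are exactly the Leibniz rule in disguise, and its Subcase~2 corresponds to your $E_0\circ j=0$ and $\chi\circ E_0=\overline P$ identities, so the two arguments are doing the same work but your packaging is tighter.

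One factual slip: you assert that the Leibniz identity for $\widetilde E_A$ relative to the comultiplication $\Delta$ ``holds over any ring,'' and that the merge $m(x\otimes x)$ is the only place characteristic $2$ enters. That is not so. Testing $\Delta$ on $1$: the right-hand side $(\widetilde E_A\otimes\id+\id\otimes\widetilde E_A)(\Delta(1))=(\widetilde E_A\otimes\id+\id\otimes\widetilde E_A)(1\otimes x+x\otimes 1)=2\,(1\otimes 1)$, while $\Delta(\widetilde E_A(1))=\Delta(0)=0$, so the split relation also requires $2=0$. (Testing on $x$ is fine over any ring.) The hypothesis is further used, as you do note later, in the identity $ab+\overline P(ab)=P(ab)$ in the last case. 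None of this affects the conclusion --- both the merge and split obstructions vanish mod $2$ --- but the claim that the comultiplication case is characteristic-free should be corrected, and the statement that the $m(x\otimes x)$ computation is the ``only point at which the hypothesis enters'' should be softened accordingly.
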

	\begin{proof}
		We have already shown that $\lambda$ is a graded linear isomorphism so it suffices to show that it is multiplicative, i.e. that
		\begin{align}
			\lambda((a^!b,\bm{v})\otimes s_1)\lambda((b^!c,\bm{v}')\otimes s_2)=\lambda((a^!b,\bm{v})(b^!c,\bm{v}')\otimes s_1s_2).
		\end{align}
		We do this by dividing into cases --- note that we do not need to consider products of the form $(a^!b,\bm{v})(c^!d,\bm{v}')$ for $b\neq c$ since these are always zero in $H_n$ and, therefore, also in $\widetilde{H}_n$.
		\renewcommand{\qedsymbol}{}
	\end{proof}
	\begin{proof}[Case 1: $s_1=s_2=x$]
		By far-commutation of saddles, we may always arrange for the marked components to merge first. Since $x^2=0$, we have
		\begin{align}
			(a^!b,\bm{v})_x(b^!c,\bm{v}')_x=0,
		\end{align}
		i.e. $0=\lambda(((a^!b,\bm{v})\otimes x)((b^!c,\bm{v}')\otimes x))=\lambda((a^!b,\bm{v})\otimes x)\lambda((b^!c,\bm{v}')\otimes x)$ for any basis elements $(a^!b,\bm{v}),(b^!c,\bm{v}')\in\widetilde{\mathcal{B}}_n$.
		\renewcommand{\qedsymbol}{}
	\end{proof}
	\begin{proof}[Case 2: $s_1=1$ and $s_2=x$]
		Next, consider $\lambda((a^!b,\bm{v})\otimes 1)\lambda((b^!c,\bm{v}')\otimes x)$: this is equal to $(a^!b,\bm{v})_1(b^!c,\bm{v}')_x$ since $(a^!b,\bm{v}_\kappa)(b^!c,\bm{v}')_x=0$ for any $\kappa\in\mathfrak{X}(a^!b,\bm{v})$ as the marked components of both elements in this product are labeled $x$ so their merger creates a label of $x^2=0$. Now suppose that the product $(a^!b,\bm{v})(b^!c,\bm{v}')$ in $\widetilde{H}_n$ is given as a linear combination of elements of the basis $\widetilde{\mathcal{B}}_n$ by
		\begin{align}
			(a^!b,\bm{v})(b^!c,\bm{v}')=\sum\limits_i(a^!c,\bm{v}''_i).
		\end{align}
		We claim that
		\begin{align}
			(a^!b,\bm{v})_1(b^!c,\bm{v}')_x=\sum\limits_i(a^!c,\bm{v}''_i)_x=\sum_i\lambda((a^!c;\bm{v}_i'')\otimes x).
		\end{align}
		Note that, under the saddle cobordism $a^!b\sqcup b^!c\to a^!c$, if the marked components merge and do not subsequently split, then this is true automatically. Otherwise, in $\widetilde{H}_n$, any splittings of the marked component produces some number of new components in the summands $(a^!c,\bm{v}''_i)$, each of which is labeled $x$. In $H_n$, after the first merger occuring in the saddle cobordism, the marked component of $(a^!b,\bm{v})_1(b^!c,\bm{v}')_x$ becomes labeled by $x$ and any subsequent splittings produce the same new components as before, each of which is again labeled by $x$ since we have $\Delta(x)=x\otimes x$. Therefore, we have
		\begin{align}
			\lambda((a^!b,\bm{v})\otimes 1)\lambda((b^!c,\bm{v}')\otimes x)=\lambda(((a^!b,\bm{v})\otimes 1)((b^!c,\bm{v}')\otimes x)),
		\end{align}
		as desired.
		\renewcommand{\qedsymbol}{}
	\end{proof}
	\begin{proof}[Case 3: $s_1=x$ and $s_2=1$]
		It follows from the previous case that
		\begin{align}
			\lambda((a^!b,\bm{v})\otimes x)\lambda((b^!c,\bm{v}')\otimes 1)=\lambda(((a^!b,\bm{v})\otimes x)((b^!c,\bm{v}')\otimes 1)).
		\end{align}
		To see this, note that the algebra anti-automorphisms $\overline{(-)}:\widetilde{H}_n\otimes A\to\widetilde{H}_n\otimes A$ and $\overline{(-)}:H_n\to H_n$ given in both cases by $\overline{(a^!b,\bm{v})}=(b^!a,\bm{v})$ satisfy $\overline{\lambda((a^!b,\bm{v})\otimes s)}=\lambda(\overline{(a^!b,\bm{v})}\otimes s)$ by construction.
		It is then straightforward to show that $\overline{\lambda((a^!b,\bm{v})\otimes x)\lambda((b^!c,\bm{v}')\otimes 1)}$ is equal to $\overline{\lambda(((a^!b,\bm{v})\otimes x)((b^!c,\bm{v}')\otimes 1))}$ by a direct computation using Case 2.
		\renewcommand{\qedsymbol}{}
	\end{proof}
	\begin{proof}[Case 4: $s_1=s_2=1$]
		Let $\Sigma:c\to c'$ be a connected, orientable, 2-dimensional cobordism, where $c$ and $c'$ are disjoint unions of planar circles. Recall that if $\bm{v}$ and $\bm{w}$ are labelings of $c$ and $c'$ by $\{1,x\}$ then $\bm{w}$ occurs as a summand in $\Kh(\Sigma)(\bm{v})$ if and only if $g(\Sigma)=0$ and $\#_x\bm{v}+\#_1\bm{w}=1$. Here, for a labeling $\bm{u}$, the quantities $\#_1\bm{u}$ and $\#_x\bm{u}$ are the number of components labeled $1$ and $x$ by $\bm{u}$. The same holds true for $\Khred(\Sigma)(\bm{v})$ subject to the constraint that only those $\bm{v}$ and $\bm{w}$ which label the marked component 1 are permitted (cf. Figure \ref{Saddle}). Now suppose we are given generators $(a^!b,\bm{v})$ and $(b^!c,\bm{v}')$ of $\widetilde{H}_n$ and consider the minimal saddle cobordism $\Sigma:a^!b\sqcup b^!c\to a^!c$. We claim that $\lambda(\Khred(\Sigma)(\bm{v}\sqcup\bm{v}')\otimes 1)=\Kh(\Sigma)(\lambda\otimes\lambda((\bm{v}\sqcup\bm{v}')\otimes 1))$.
		\begin{figure}
			\begin{center}
				\includegraphics[scale=0.5]{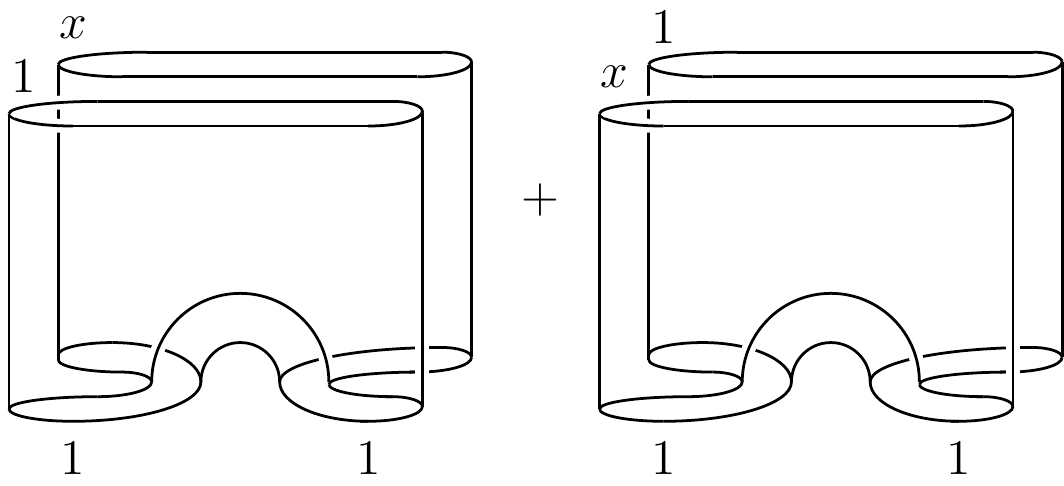}\hspace{2cm}\includegraphics[scale=0.5]{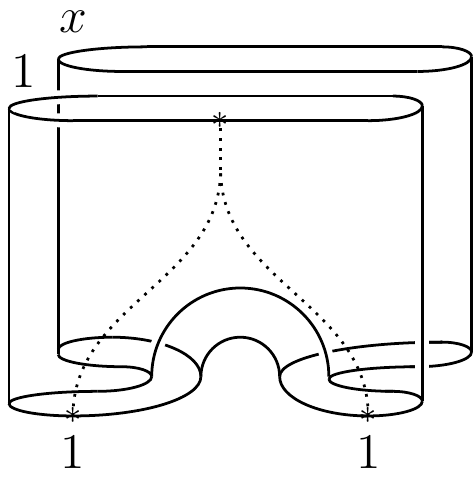}
			\end{center}
			\caption{An example of $\Kh(\Sigma)(\bm{v})$ (left) versus $\Khred(\Sigma)(\bm{v})$ (right) in which the two differ.}
			\label{Saddle}
		\end{figure}
		\begin{proof}[Subcase 1.]
			We first consider the case that $\bm{v}\sqcup\bm{v}'$ labels all of the incoming circles of the component $\Sigma_*$ of $\Sigma$ which contains the marked incoming circles by $1$. In the reduced product $\Khred(\Sigma)(\bm{v}\sqcup\bm{v}')$, each $\bm{w}$ occurring as a summand labels the marked outgoing circle by $1$ and any other outgoing circles of $\Sigma_*$ by $x$. The unreduced product $\Kh(\Sigma)(\bm{v}\sqcup\bm{v}')$ consists of these terms plus terms in which the marked outgoing circle is labeled $x$ and exactly one of the remaining outgoing circles of $\Sigma_*$ is labeled $1$. The summand of $\lambda(\Khred(\Sigma)(\bm{v}\sqcup\bm{v}')\otimes 1)$ consisting of $\Khred(\Sigma)(\bm{v}\sqcup\bm{v}')$ and those terms obtained only by summing over the $x$-labeled outgoing circles of $\Sigma_*$ is precisely $\Kh(\Sigma)(\bm{v}\sqcup\bm{v}')$. It thus suffices to show that the remaining terms either come in cancelling pairs or come from swapping the label on a marked incoming circle of $\Sigma$ with that of an $x$-labeled circle. Consider a connected component $\Sigma_1$ of $\Sigma\smallsetminus\Sigma_*$. If the incoming circles of $\Sigma_1$ are all labeled $1$ and $\Sigma_1$ has $\ell$ outgoing circles, then any labeling $\bm{w}_1$ of these circles occuring as a sublabeling of a term in $\Khred(\Sigma)(\bm{v}\sqcup\bm{v}')$ labels $\ell-1$ of them by $x$ and one of them by $1$. Moreover, if $\bm{w}$ is a summand of $\Khred(\Sigma)(\bm{v}\sqcup\bm{v}')$ and $\bm{w}_1$ occurs as a sublabeling of $\bm{w}$, then every possible labeling $\bm{w}'$ obtained from $\bm{w}$ by permutating $\bm{w}_1$ occurs exactly once as a summand of $\Khred(\Sigma)(\bm{v}\sqcup\bm{v}')$. Now, for any labeling $\bm{w}$ and sub-labeling $\bm{w}_1$ of the outgoing circles of $\Sigma_1$ and any choice of $x$-labeled component $\kappa$ coming from $\bm{w}_1$, there exists a $\bm{w}'$ and $\bm{w}_1'$ such that $\bm{w}$ and $\bm{w}'$ agree away from $\bm{w}_1$ and $\bm{w}_1'$ and a choice of $x$-labeled component $\kappa'$ coming from $\bm{w}_1'$ such that the labelings $\bm{w}_\kappa$ and $\bm{w}'_{\kappa'}$ agree. All such choices come in pairs so the summands of $\lambda(\Khred(\Sigma)(\bm{v}\sqcup\bm{v}')\otimes 1)$ coming from summing over the $x$-labeled outgoing circles of $\Sigma_1$ cancel.
			
			Note that if more than one incoming circle of $\Sigma_1$ is labeled $x$, then we have $\Khred(\Sigma)(\bm{v}\sqcup\bm{v}')=0$. On the other hand, we also have $\Kh(\Sigma)(\lambda\otimes\lambda((\bm{v}\sqcup\bm{v}')\otimes 1))=0$ since either more than two of the incoming circles is labeled $x$ --- in which case applying $\Kh(\Sigma)$ to every term of $\lambda\otimes\lambda((\bm{v}\sqcup\bm{v}')\otimes 1)$ yields zero --- or exactly two are, call them $\kappa$ and $\kappa'$. In the latter case, the terms $\Kh(\Sigma)((\bm{v}\sqcup\bm{v}')_\kappa)$ and $\Kh(\Sigma)((\bm{v}\sqcup\bm{v}')_{\kappa'})$ agree and, hence, cancel modulo 2. If exactly one of the incoming circles $\kappa_0$ of $\Sigma_1$ is labeled by $x$, then every outgoing circle of $\Sigma_1$ is also labeled $x$. If, as before, $\Sigma_1$ has $\ell$ outgoing circles $\kappa_1,\dots,\kappa_\ell$, then the summand $\Khred(\Sigma)(\bm{v}\sqcup\bm{v}')_{\kappa_1}+\cdots+\Khred(\Sigma)(\bm{v}\sqcup\bm{v}')_{\kappa_\ell}$ of $\lambda(\Khred(\Sigma)(\bm{v}\sqcup\bm{v}')\otimes 1)$ co\"{i}ncides precisely with the summand $\Kh(\Sigma)((\bm{v}\sqcup\bm{v}')_{\kappa_0})$ of $\Kh(\Sigma)(\lambda\otimes\lambda((\bm{v}\sqcup\bm{v}')\otimes 1))$. Therefore, we have $\lambda(\Khred(\Sigma)(\bm{v}\sqcup\bm{v}')\otimes 1)=\Kh(\Sigma)(\lambda\otimes\lambda((\bm{v}\sqcup\bm{v}')\otimes 1))$.
			\renewcommand{\qedsymbol}{}
		\end{proof}
		\begin{proof}[Subcase 2.]
			If at least one of the incoming circles of the component $\Sigma_*$ is labeled $x$, then $\Khred(\Sigma)(\bm{v}\sqcup\bm{v}')\otimes 1$ necessarily vanishes. If more than one of these incoming circles is labeled $x$, then, as before, every term of $\lambda((\bm{v}\sqcup\bm{v}')\otimes 1)$ necessarily also labels at least two of the incoming circles on this component by $x$ so we also have that $\Kh(\Sigma)(\lambda\otimes\lambda((\bm{v}\sqcup\bm{v}')\otimes 1))=0$. If exactly one incoming circle $\kappa_0$ of $\Sigma_*$ is labeled $x$ --- assume for simplicity that this label comes from $\bm{v}$ --- then the terms of $\lambda\otimes\lambda((\bm{v}\sqcup\bm{v}')\otimes 1)$ consist of $\bm{v}\sqcup\bm{v}'$, $\bm{v}_{\kappa_0}\sqcup\bm{v}'$, and terms of the form $\bm{v}_\kappa\sqcup\bm{v}'$, $\bm{v}\sqcup\bm{v}'_{\kappa'}$, and $\bm{v}_{\kappa}\sqcup\bm{v}'_{\kappa'}$ where $\kappa$ and $\kappa'$ are incoming circles of a component of $\Sigma\smallsetminus\Sigma_*$ labeled $x$ by $\bm{v}$ and $\bm{v}'$, respectively. In $\Kh(\Sigma)(\lambda((\bm{v}\sqcup\bm{v}')\otimes 1))$, the first two of these terms contribute two identical and hence cancelling terms since we are working in characteristic 2 and the remaining terms contribute $0$ since $\Sigma$ merges at least two $x$-labeled circles in those cases.
			\renewcommand{\qedsymbol}{}
		\end{proof}
	\end{proof}
	\begin{example}
		Using the same convention for hollow and filled dots as before, in $\widetilde{H}_3\otimes A$, we have
		\begin{align}
			\left(\cei\otimes 1\right)\left(\eci\otimes 1\right)=\ccixx\otimes 1,
		\end{align}
		while in $H_3$, we have
		\begin{align}
			\cei\eci=\ccixx+\ccxix+\ccxxi
		\end{align}
		and
		\begin{align}
			\ccixx+\ccxix+\ccxxi=\ccixx_1+\sum_{\kappa\in\mathfrak{X}\left(\scalebox{0.45}{\ccixx}\right)}\ccixx_\kappa=\lambda\left(\ccixx\otimes 1\right).
		\end{align}
	\end{example}
	\begin{example}
		In $\widetilde{H}_2\otimes A$, we have
		\begin{align}
			\left(\aaix\otimes 1\right)\left(\abi\otimes 1\right)=0
		\end{align}
		while
		\begin{align}
			\begin{split}
				\lambda\left(\aaix\otimes 1\right)\lambda\left(\abi\otimes 1\right)&=\left(\aaix+\aaxi\right)\abi\\&=2\abx\\&=0
			\end{split}
		\end{align}
		modulo 2, which shows that $\lambda$ cannot possibly be a multiplicative map in characteristics other than 2.
	\end{example}
	\begin{example}
		We consider two more examples to exhibit some of the phenomena that can occur when comparing $m\circ(\lambda\otimes\lambda)$ and $\lambda\circ \widetilde{m}$ in characteristic 2. Suppose that
		\begin{align}
			(a_1^!b_1,\bm{v})=\deii
		\end{align}
		and
		\begin{align}
			(b_1^!c_1,\bm{v}')=\edii,
		\end{align}
		then
		\begin{align}
			((a_1^!b_1,\bm{v})\otimes 1)((b_1^!c_1,\bm{v}')\otimes 1)=\left(\ddiix+\ddixi\right)\otimes 1
		\end{align}
		so
		\begin{align}
			\begin{split}
				\lambda(((a_1^!b_1,\bm{v})\otimes 1)((b_1^!c_1,\bm{v}')\otimes 1))&=\ddiix+\ddixi+2\ddxii\\[-0.3cm]\\&=\ddiix+\ddixi
			\end{split}
		\end{align}
		modulo 2. On the other hand, we have
		\begin{align}
			\begin{split}
				\lambda((a_1^!b_1,\bm{v})\otimes 1)\lambda((b_1^!c_1,\bm{v}')\otimes 1)&=\deii\edii\\[-0.3cm]\\&=\ddiix+\ddixi.
			\end{split}
		\end{align}
		This is an instance of the first part of Case 4, Subcase 1, in the proof of the main theorem. Similarly, if 
		\begin{align}
			(a_2^!b_2,\bm{w})=\deix
		\end{align}
		and $(b_2^!c_2,\bm{w}')=(b_1^!c_1,\bm{v}')$, then we have
		\begin{align}
			((a_2^!b_2,\bm{w})\otimes 1)((b_2^!c_2,\bm{w}')\otimes 1)=\ddixx\otimes 1
		\end{align}
		so
		\begin{align}
			\lambda(((a_2^!b_2,\bm{w})\otimes 1)((b_2^!c_2,\bm{w}')\otimes 1))=\ddixx+\ddxix+\ddxxi
		\end{align}
		while
		\begin{align}
			\lambda((a_2^!b_2,\bm{w})\otimes 1)=\deix+\dexi
		\end{align}
		so
		\begin{align}
			\begin{split}
				\lambda((a_2^!b_2,\bm{w})\otimes 1)\lambda((b_2^!c_2,\bm{w}')\otimes 1)&=\left(\deix+\dexi\right)\edii\\[-0.3cm]\\&=\ddixx+\ddxix+\ddxxi.
			\end{split}
		\end{align}
		This is an instance of the second part of Case 4, Subcase 1.
	\end{example}
	\subsection{Bimodules of planar tangles}
	Now suppose that $T$ is a planar (crossingless) $(2m,2n)$-tangle diagram and let
	\begin{align}
		\CKh(T)=\bigoplus_{a\in\mathfrak{C}_m,b\in\mathfrak{C}_n}\CKh(a^!Tb)
	\end{align}
	be the associated $(H_m,H_n)$-bimodule. Choose either the left bottom-most endpoint or the right bottom-most endpoint of $T$ as a marked point for every $a^!Tb$ and denote the corresponding reduced bimodules by $\CKhred^L(T)$ and $\CKhred^R(T)$, respectively. We define a map $\lambda^L:\CKhred^L(T)\otimes A\to\CKh(T)$ as follows: given a labeling $\bm{v}:\pi_*(a^!Tb)\to\{1,x\}$, let $\mathfrak{X}(a^!Tb,\bm{v})$ denote the set of all components of $a^!Tb$ labeled $x$ by $\bm{v}$. We then define
	\begin{align}
		\lambda^L((a^!Tb,\bm{v})\otimes s)=\begin{cases}
			(a^!Tb,\bm{v})_x\hspace{4.2cm}\textup{if $s=x$}\\
			(a^!Tb,\bm{v})_1+\sum\limits_{\kappa\in\mathfrak{X}(a^!Tb,\bm{v})}(a^!Tb,\bm{v}_\kappa)\hspace{0.5cm}\textup{otherwise},
		\end{cases}
	\end{align}
	where, as before, $(a^!Tb,\bm{v})_s$ and $(a^!Tb,\bm{v}_\kappa)$ are the elements of $\CKh(T)$ obtained by labeling the marked component by $s$ and by swapping the label of $\kappa$ and the marked component, respectively. We define $\lambda^R$ similarly. Note that if the bottom left-most and bottom right-most endpoints of $T$ are on the same connected component, then the two maps co\"{i}ncide.
	\begin{proposition}\label{BimoduleProp}
		If $R$ is a ring of characteristic $2$, then $\lambda^L$ (resp. $\lambda^R$) is a graded linear isomorphism intertwining the left $\widetilde{H}_m\otimes A$- and $H_m$-module (resp. right $\widetilde{H}_n\otimes A$- and $H_n$-module) structures on $\CKhred^L(T)\otimes A$ (resp. $\CKhred^R(T)\otimes A$) and $\CKh(T)$. However, they are not bimodule isomorphisms in general.
	\end{proposition}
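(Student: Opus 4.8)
The plan is to prove Proposition \ref{BimoduleProp} by reducing it, as much as possible, to Theorem \ref{MainThm} and then handling the genuinely new feature --- that $a^!Tb$ need not be a union of circles but contains arcs connecting the top and bottom boundary of $T$ --- separately.

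First I would establish that $\lambda^L$ is a graded linear isomorphism. This is essentially identical to the proof that $\lambda$ is a graded linear isomorphism: writing $\lambda^L$ in the ordered basis $\mathcal{B}_m^{\mathrm{std}}(T)$ of $\CKh(T)$ (generators $(a^!Tb,\bm{v})$ with the marked component labeled $1$ listed before those labeled $x$) produces a block lower-triangular matrix $\left(\begin{smallmatrix}\id & 0\\ B & \id\end{smallmatrix}\right)$ with $B$ having entries in $\{0,1\}$, hence invertible; together with the rank equality $\rk_R(\CKhred^L(T)\otimes_R A)=\rk_R\CKh(T)$ and invariant basis number, this gives the linear isomorphism, and the quantum-grading bookkeeping (each $(a^!Tb,\bm{v}_\kappa)$ and $(a^!Tb,\bm{v})_1$ has the same number of $x$-labels as $(a^!Tb,\bm{v})\otimes 1$) shows it is graded. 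I would simply note this argument transfers verbatim.

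Next I would verify that $\lambda^L$ intertwines the \emph{left} module structures. The left $H_m$-action on $\CKh(a^!Tb)$ is again given by applying $\CKh$ to minimal saddle cobordisms $\Sigma: a'^!a\sqcup a^!Tb \to a'^!Tb$, and the marked point of $T$ (the bottom-most left endpoint) lies on the left edge, i.e. on the $a^!(\cdot)$ side, so the component carrying the marked point behaves under left multiplication exactly as the marked component of $a^!b$ behaves under multiplication in $H_m$. The four-case analysis of the proof of Theorem \ref{MainThm} --- $s_1=s_2=x$; $s_1=1,s_2=x$; $s_1=x,s_2=1$; $s_1=s_2=1$ with its two subcases --- used only (i) far-commutativity of saddles, (ii) $x^2=0$, (iii) $\Delta(x)=x\otimes x$, and (iv) the combinatorics of which labelings appear in $\CKh$ of a genus-zero cobordism between unions of circles. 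Facts (i)--(iii) are unchanged. For (iv), the component $\Sigma_*$ through which the marked circle flows is still a genus-zero surface all of whose boundary circles are among the circles of $a'^!a\sqcup a^!Tb$ and $a'^!Tb$; the presence of through-strands in $a^!Tb$ only changes \emph{which} $\pi_0$-classes are circles vs. arcs, but arcs are never merged or split by $\Sigma$ and never carry labels, so they are inert and the same cancellation-in-pairs arguments of Subcases 1 and 2 apply word for word. Hence $\lambda^L$ is left-linear, and by the same anti-automorphism $\overline{(a^!Tb,\bm{v})}=(b^!T^!a,\bm{v})$ trick used in Case 3, $\lambda^R$ is right-linear.

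The main obstacle --- and the source of the final sentence of the proposition --- is that $\lambda^L$ is generally \emph{not} right-linear (equivalently $\lambda^L$ is not a bimodule map), and dually for $\lambda^R$. I would prove this by exhibiting an explicit planar tangle $T$ whose bottom-left and bottom-right endpoints lie on \emph{different} components of $a^!Tb$ for suitable $a,b$. For such $T$ the marked component (on the left) is untouched by the right $H_n$-action, so right-multiplying a generator whose \emph{right}-relevant unmarked component is labeled $x$ can, after applying $\lambda^L$, produce a term $2(\cdots)=0$ mod $2$ on one side but a nonzero term on the other --- precisely the mechanism illustrated in the third displayed Example of the paper (the $\aaix\otimes 1$ computation), now transplanted so that the ``doubling'' happens on the side \emph{opposite} the marked point. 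Concretely I would take the simplest $T$ (e.g. the identity $(2,2)$-tangle, or a single cup-cap) for which $a^!Tb$ has at least two circle components with the marked point on one and a right-cup meridian on the other, compute $\lambda^L\big((a^!Tb,\bm v)\otimes 1\big)\cdot h$ versus $\lambda^L\big(((a^!Tb,\bm v)\otimes 1)\cdot h\big)$ for a well-chosen $h\in H_n$, and observe they differ. Since the analogous computation with the roles of left and right reversed shows $\lambda^R$ fails to be left-linear, neither map is a bimodule isomorphism in general. (When the two bottom endpoints of $T$ do lie on the same component, $\lambda^L=\lambda^R$ and one would not expect the failure --- but the proposition only claims failure ``in general'', so a single counterexample suffices.)
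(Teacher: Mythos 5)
Your proposal matches the paper's strategy exactly: the paper's proof consists of the single sentence that the argument is ``essentially identical to the proof of Theorem \ref{MainThm}'' followed by an explicit counterexample showing failure of the two-sided module structure, and that is precisely the structure you outline. Your elaboration of why the four-case analysis carries over --- that the marked component of $a'^!a$ and the left-marked component of $a^!Tb$ merge along the bottom saddle just as in the algebra, so far-commutation, $x^2=0$, $\Delta(x)=x\otimes x$, and the genus-zero counting of labelings all apply verbatim --- is a correct filling-in of what the paper leaves implicit, and the observation that $\lambda^R$ follows from $\lambda^L$ by flipping $T$ is a valid way to dispose of the second map.

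Two points, however, deserve correction. First, $a^!Tb$ is a \emph{closed} diagram: $a^!$ is a $(0,2m)$-diagram, $T$ a $(2m,2n)$-diagram, and $b$ a $(2n,0)$-diagram, so their composite has no free endpoints at all. There are no ``arcs'' or ``through-strands'' to worry about; your paragraph addressing them is vacuously true and harmless, but it signals a misreading of the setup. Second, and more substantively, the counterexamples you propose need to be checked and at least one of them definitively fails. For the identity $(2,2)$-tangle, the bottom-left and bottom-right endpoints of $T$ lie on the same strand, hence on the same component of every $a^!Tb$; the paper explicitly notes that in this situation $\lambda^L=\lambda^R$, and indeed $\CKh(T)\cong H_1$ (or $H_m$ for the identity $(2m,2m)$-tangle) as a bimodule, so no failure occurs there. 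For the single cup--cap $(2,2)$-tangle with $m=n=1$, a direct check shows $\lambda^L$ \emph{is} a bimodule map, because $\widetilde H_1=R$ has trivial ring structure and the only non-identity right action is multiplication by $x$, which commutes with the correction terms of $\lambda^L$; you need at least $m,n\ge 2$ so that the right action can create a nontrivial merge-then-split on the side away from the marked point, which is exactly what the paper's tangle $T$ and the generator $\aaix\aaxi$ are chosen to realize. So your proof of the negative clause is a gap until you actually produce and verify a working example rather than gesturing at the identity tangle. Everything else is sound.
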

	\begin{proof}
		The proof for both is essentially identical to the proof of Theorem \ref{MainThm}. The example that follows shows that $\lambda^L$ and $\lambda^R$ need not be bimodule isomorphisms when they are not equal.
	\end{proof}
	\begin{example}
		Let $T=\tangleone$ and consider $\aaix\aaxi\in\CKhred^L(T)$. Consider the left- and right-actions of the elements $\bai,\abi\in\widetilde{H}_2$: we have that
		\begin{align}
			\left(\bai\otimes 1\right)\left(\aaix\aaxi\otimes 1\right)=0
		\end{align}
		and
		\begin{align}
			\begin{split}
				\lambda^L\left(\bai\otimes 1\right)\lambda^L\left(\aaix\aaxi\otimes 1\right)&=\bai\left(\aaix\aaxi+\aaxi\aaxi+\aaxx\aaii\right)\\&=2\bax\aaxi\\&=0
			\end{split}
		\end{align}
		modulo 2, as expected, and, on the other hand, we have
		\begin{align}
			\left(\aaix\aaxi\otimes 1\right)\left(\abi\otimes 1\right)=0
		\end{align}
		while
		\begin{align}
			\begin{split}
				\lambda^L\left(\aaix\aaxi\otimes 1\right)\lambda^L\left(\abi\otimes 1\right)&=\left(\aaix\aaxi+\aaxi\aaxi+\aaxx\aaii\right)\abi\\&=\aaix\abx+\aaxi\abx+\aaxx\abi\\&\neq 0
			\end{split}
		\end{align}
		so $\lambda^L$ is not a right-module homomorphism, even in characteristic 2.
	\end{example}
	\section{$\Z$-co\"{e}fficients}
	We will now show that there is in general no such decomposition of arc algebras over $\Z$. To that end, let
	\begin{align}
		\alpha=a\aaii+b\aaix+c\abi+d\bai+e\bbii+f\bbix
	\end{align}
	be an arbitrary central element in $\widetilde{H}_2$. Then we have
	\begin{align}
		0=\left[\alpha,\bbii\right]=c\abi-d\bai,
	\end{align}
	so $c=d=0$. It then follows that
	\begin{align}
		0=\left[\alpha,\abi\right]=(a-e)\abi
	\end{align}
	so $a=e$. Therefore, $\alpha$ is of the form
	\begin{align}
		\alpha=a\left(\aaii+\bbii\right)+b\aaix+d\bbix.
	\end{align}
	One can check that both $\aaix$ and $\bbix$ are themselves central so
	\begin{align}
		Z(\widetilde{H}_2)=\Z\left\langle\aaii+\bbii,\aaix,\bbix\right\rangle.
	\end{align}
	Now, since $\widetilde{H}_2$ and $A$ are both free as abelian groups and $A$ is commutative, we have $Z(\widetilde{H}_2\otimes A)=Z(\widetilde{H}_2)\otimes A$.
	
	In \cite{Khovanov2006}, Khovanov showed that the only invertible central elements of degree 0 in $H_n$ with $\Z$-co\"{e}fficients are $\pm 1$ and, as a consequence, that if $M$ is an invertible complex of graded $H_n$-bimodules, then the only degree 0 automorphisms of $M$ are $\pm\id$. The same argument holds, mutatis mutandis, in characteristic 2 to show that the only degree 0 automorphisms of $\widetilde{H}_n\otimes A$ and $H_n$ are the respective identity maps. In particular, this tells us that if there were a graded algebra isomorphism $\Lambda:\widetilde{H}_2\otimes A\to H_2$, then $\Lambda=\lambda$ modulo 2 so
	\begin{align}
		\Gamma:=\Lambda\left(\aaix\otimes 1\right)=s\aaix+t\aaxi
	\end{align}
	for some $s,t\in\{\pm 1\}$. Now $\Gamma$ is central since $\Lambda$ is an algebra isomorphism so
	\begin{align}
		0=\left[\Gamma,\abi\right]=(s+t)\abx
	\end{align}
	and hence $t=-s$. Up to composing $\Lambda$ with $-\id$, we may assume $s=1$ so
	\begin{align}
		\Gamma=\aaix-\aaxi.
	\end{align}
	On the other hand, we have
	\begin{align}
		\left(\aaix\otimes 1\right)^2=0
	\end{align}
	so we would have to have
	\begin{align}
		0=\Gamma^2=-2\aaxx,
	\end{align}
	which is false. Therefore no such isomorphism can exist. Now note that $\widetilde{H}_2\otimes A$ and $H_2$ include into $\widetilde{H}_n\otimes A$ and $H_n$, respectively, as subalgebras $\widetilde{J}_2\otimes A$ and $J_2$ for any $n>2$ by stacking $n-2$ round 1-labeled circles above every generator. If we had a $\Z$-algebra isomorphism $\Lambda:\widetilde{H}_n\otimes A\to H_n$ and $e\in\widetilde{H}_n$ is a minimal idempotent, i.e. $e=(a^!a,\bm{1})$ for some $a\in\mathfrak{C}_n$, then we necessarily have that $\Lambda(e\otimes 1)=\pm e_1$ since $\lambda(e\otimes 1)=e_1$. This tells us that the restriction of $\Lambda$ to $\widetilde{J}_2\otimes A$ would give us an algebra isomorphism $\widetilde{J}_2\otimes A\to J_2$ but we have shown this is impossible. Therefore, there is no graded $\Z$-algebra isomorphism $\widetilde{H}_n\otimes A\to H_n$ for any $n>1$.
	\section*{Further Directions}
	In \cite{Wang2021}, Wang showed that there are bigraded $R$-module isomorphisms $\KR_p(L;R)\cong\KRred_p(L;R)\otimes_R R[x]/(x^p)$ relating the unreduced and reduced Khovanov-Rozansky $\mathfrak{sl}_p$-link homologies (cf. \cite{Khovanov2004a,Khovanov2008}) whenever $R$ is a ring of characteristic $p$. Analogs of the arc algebras in the setting of $\mathfrak{sl}_p$ homology, the $\mathfrak{sl}_p$-web algebras, were introduced by Mackaay-Pan-Tubbenhauer, in the $p=3$ case, and Mackaay in \cite{Mackaay2012,Mackaay2014}. There is also an annular version of the arc algebra which was studied by Ehrig-Tubbenhauer in \cite{Ehrig2019}.
	
	In \cite{Ozsvath2013}, Ozsv\'{a}th, Rasmussen, and Szab\'{o} defined an ``odd'' version of Khovanov homology using an exterior version of the Frobenius algebra used in the original construction. This invariant also categorifies the Jones polynomial and agrees with ordinary Khovanov homology modulo 2. As in the characteristic 2 case, there is a splitting of odd Khovanov homology with $\Z$-co\"{e}fficients (cf. \cite{Ozsvath2013}, Proposition 1.8). Moreover, other properties of Khovanov homology in characteristic 2 can be realized as the mod 2 reduction of a property of odd Khovanov homology. For instance, Wehrli proved in \cite{wehrli2010mutation} that Khovanov homology with $\F$-co\"{e}fficients is mutation invariant and this was shown by Bloom for odd Khovanov homology in \cite{bloom2009odd}. The odd analogues of the arc algebras and bimodules for tangles were studied by Naisse-Vaz in \cite{naisse2016odd} and Naisse-Putyra in \cite{naisse2020odd}, respectively. Unlike the ordinary arc algebras, however, odd arc algebras are only associative up to a sign depending on the elements being multiplied.
	
	In \cite{KhovanovRobert2020}, Khovanov and Robert studied a deformation $A_\alpha$ of the TQFT $A$ defined over the ring $R_\alpha=\Z[\alpha_0,\alpha_1]\cong H^*_{U(1)\times U(1)}(\mathrm{pt})$ as an $R_\alpha$-algebra by
	\begin{align}
		A_\alpha=R_\alpha[x]/((x-\alpha_0)(x-\alpha_1))\cong H^*_{U(1)\times U(1)}(S^2)
	\end{align}
	with comultiplication given by
	\begin{align}
		\begin{split}
			&1\mapsto 1\otimes x+x\otimes 1-(\alpha_0+\alpha_1)1\otimes 1\\
			&x\mapsto x\otimes x-\alpha_0\alpha_1 1\otimes 1.
		\end{split}
	\end{align}
	This TQFT defines a link invariant in the same way as does $A$ and, taking different values for the parameters $\alpha_0$ and $\alpha_1$ at the chain level, one can recover both Khovanov and Lee homology. One may define deformed arc algebras $H_n^\alpha$ and $\widetilde{H}_n^\alpha$ analogous to the unsual ones. However, even in characteristic 2, the na\"{i}ve $R_\alpha$-linear extension of $\lambda$ to a map $\widetilde{H}_n^\alpha\otimes A_\alpha\to H_n^\alpha$ is not multiplicative. For example, letting $h=\alpha_0+\alpha_1$ and $t=\alpha_0\alpha_1$ for the sake of brevity, in $\widetilde{H}_3^\alpha\otimes A_\alpha$, we have
	\begin{align}
		\begin{split}
			&\left(\bei\otimes 1\right)\left(\ebi\otimes 1\right)\\&=\left(h^2\bbiii+h\left(\bbixi+\bbiix\right)+\bbixx\right)\otimes 1
		\end{split}
	\end{align}
	so
	\begin{align}
		\begin{split}
			&\lambda\left(\left(\bei\otimes 1\right)\left(\ebi\otimes 1\right)\right)\\&=h^2\bbiii+h\left(\bbixi+\bbiix\right)+\bbixx+\bbxix+\bbxxi.
		\end{split}
	\end{align}
	On the other hand, in $H_3^\alpha$, we have
	\begin{align}
		\begin{split}
			&\bei\ebi\\&=(h^2+t)\bbiii+h\left(\bbixi+\bbiix+\bbxii\right)+\bbixx+\bbxix+\bbxxi
		\end{split}
	\end{align}
	so
	\begin{align}
		\lambda\left(\left(\bei\otimes 1\right)\left(\ebi\otimes 1\right)\right)\neq\lambda\left(\bei\otimes 1\right)\lambda\left(\ebi\otimes 1\right).
	\end{align}
	In light of the present result, it is natural to ask whether or not there are splittings analogous to ours in each of these settings: in characteristic $p$ for the $\mathfrak{sl}_p$-web algebras, over $\Z$ for the odd arc algebras, and in characteristic 2 for the annular arc algebras, respectively. In the last setting, this would take the form of an algebra isomorphism $\lambda^\alpha:\widetilde{H}_n^\alpha\otimes A_\alpha\to H_n^\alpha$ in characteristic 2 which recovers $\lambda$ for $\alpha_0=\alpha_1=0$.
	\bibliographystyle{alpha}
	\bibliography{ESbib}
\end{document}